\documentclass[12pt]{amsart}

\usepackage{color,fancyhdr}
\usepackage[all]{xy}

\usepackage{amsmath,amsthm,mathrsfs,amsfonts,
amssymb,times,latexsym,mathabx}

\usepackage[usenames,dvipsnames,svgnames,table]{xcolor}

\DeclareMathAlphabet{\curly}{U}{rsfs}{m}{n}  

\newtheorem{theorem}{Theorem}[section]
\newtheorem{lemma}{Lemma}[section]
\newtheorem{proposition}[lemma]{Proposition}
\theoremstyle{definition}

\newtheorem{corollary}[theorem]{Corollary}

\theoremstyle{problem}
\newtheorem{problem}[theorem]{Problem}
\usepackage{xcolor}

\numberwithin{equation}{section}

\makeatletter
\renewcommand{\pmod}[1]{\allowbreak\mkern7mu({\operator@font mod}\,\,#1)}
\makeatother

\newcommand{\be}{\begin{equation}}
\newcommand{\ee}{\end{equation}}

\renewcommand{\le}{\leqslant}
\renewcommand{\leq}{\leqslant}
\renewcommand{\ge}{\geqslant}
\renewcommand{\geq}{\geqslant}

\begin{document}

\title[On S\'ark\"ozy-S\'{o}s Theorem related to representation functions]
{On S\'ark\"ozy-S\'{o}s Theorem related to representation functions}

\author{Jin-Hui Fang}
\address{School of Mathematical Sciences, Nanjing Normal University, Nanjing, Jiangsu, PR China}
\email{fangjinhui1114@163.com}

\author[Kiss]{S\'andor Z. Kiss}
\address{Department of Algebra and Geometry, Institute of Mathemetics, Budapest University of Technology and Economics, M\H{u}egyetem rkp. 3., H-1111 Budapest, Hungary; 
\newline \hspace*{4mm}
HUN-REN Alfr\'ed R\'enyi Institute of Mathematics, Re\'altanoda utca 13--15., H-1053 Budapest, Hungary}
\email{kiss.sandor@ttk.bme.hu}

\author{Wei Niu}
\address{School of Mathematics and Statistics, Xi'an Jiaotong University, Xi'an, Shaanxi, PR China}
\email{wei.niu@stu.xjtu.edu.cn}

\author[S\'andor]{Csaba S\'andor}
\address{Department of Stochastics, Institute of Mathemetics, Budapest University of Technology and Economics, M\H{u}egyetem rkp. 3., H-1111 Budapest, Hungary; \newline \hspace*{4mm}
HUN-REN Alfr\'ed R\'enyi Institute of Mathematics, Re\'altanoda utca 13--15., H-1053 Budapest,  Hungary; \newline \hspace*{4mm} 
MTA--HUN-REN RI Lend\"ulet ``Momentum'' Arithmetic Combinatorics Research Group, Re\'altanoda utca 13--15., H-1053 Budapest,  Hungary}
\email{sandor.csaba@ttk.bme.hu}

\thanks{The first author is supported by  the National Natural
Science Foundation of China, Grant No. 12571005 and Basic Research Program of Jiangsu Province, Grant No. BK20250139. The second author is supported by the NKFIH grants K146387, KKP 144059. The second author is also supported by the National Research, Development and Innovation Office NKFIH (Excellence program, Grant Nr. 153829). The third author is supported from the CSC programs. The forth author is supported by the NKFIH Grants No. K129335, KKP 144059 and the Lend\"ulet ''Momentum'' program of the Hungarian Academy of Sciences (MTA)}
\keywords{S\'ark\"ozy-S\'{o}s Theorem, additive representation function, irrational}
\subjclass[2010]{Primary 11B13, Secondary 11B34}
\date{\today}%

\begin{abstract}
Let $\mathbb{N}_0$ be the set of all nonnegative integers.
For a nonempty set $\mathcal{A}\subseteq \mathbb{N}_0$ and integers $n,h\ge 2$, let $r_{h}(\mathcal{A},n)$ be the number of representations of $n$ as
$a_1+\cdots+a_h$, where $a_1\le \cdots\le a_h$ and $a_i\in \mathcal{A}$ for $i=1,\cdots,h$.
In 2016, Chen and Tang showed that, for any given distinct positive integers $u_1,\cdots,u_k$ and positive rational numbers 
$\alpha_1,\cdots,\alpha_k$ with $\alpha_1+\cdots+\alpha_k=1$,
there are infinitely many sets $\mathcal{A}\subseteq \mathbb{N}_0$ such that $r_{h}(\mathcal{A},n)\ge 1$ for all nonnegative integers $n$ and the set of $n$ with $r_{h}(\mathcal{A},n)=u_i$ has density $\alpha_i$ for all integer $i=1,\cdots,k$. In this paper, we consider the irrational numbers $\alpha_i$ as well. As a main result, we prove that, 
for any nonnegative numbers
$\alpha_0,\cdots,\alpha_m$ with $\alpha_0+\cdots+\alpha_m=1$,
there are infinitely many sets $\mathcal{A}\subseteq \mathbb{N}_0$ such that the set of $n$ with $r_{2}(\mathcal{A},n)=i$ has density $\alpha_i$ for all integer $i=0,\cdots,m$. Other related results are also contained.
\end{abstract}

\maketitle


\section{\bf Introduction}

Let $\mathbb{N}_0$ be the set of all nonnegative integers and $\mathbb{N}$ be the set of all positive integers.
For a nonempty set $\mathcal{A}\subseteq \mathbb{N}_0$ and integers $n,h\ge 2$, let $r_{h}(\mathcal{A},n)$ be the number of representations of $n$ as
$a_1+\cdots+a_h$, where $a_1\le \cdots\le a_h$ and $a_i\in \mathcal{A}$ for $i=1,\cdots,h$. For $u\in \mathbb{N}_0$ and $N\in \mathbb{N}$, define
\begin{eqnarray*}
\mathcal{S}_u^{(h)}(\mathcal{A})=\#\{n\in \mathbb{N}:r_{h}(\mathcal{A},n)=u\}
\end{eqnarray*}
and
\begin{eqnarray*}
\mathcal{S}_u^{(h)}(\mathcal{A},N)=\#\{n\le N:r_{h}(\mathcal{A},n)=u\}.
\end{eqnarray*}
For the sets $\mathcal{A}$, $\mathcal{B}$ of integers, we denote the sumset by
\begin{eqnarray*}
\mathcal{A} + \mathcal{B} = \{a+b: a\in \mathcal{A}, b\in \mathcal{B}\}.
\end{eqnarray*}
For a real number $x$, let $\mathcal{A}(x)$ be the number of positive integers in $\mathcal{A}$ not exceeding $x$.\vskip2mm

In 1997, S\'ark\"ozy and S\'{o}s \cite{Sarkozy1997} showed that for every finite set $U$ of positive integers there is a set $\mathcal{A}$ such that, apart from a ``thin" set of integers $n$, $r_{2}(\mathcal{A},n)$
assumes only the prescribed values $u\in U$ with about the same frequency. In fact, they proved the following nice result:\vskip2mm

\noindent{\textbf{Theorem A (\cite[Theorem 4.3]{Sarkozy1997}).}}
Let $k\in \mathbb{N}$ and let $u_1<u_2<\cdots<u_k$ be positive integers. Then there is an infinite set $\mathcal{A}\subseteq \mathbb{N}_0$
such that writing
\begin{eqnarray*}
\mathcal{B}=\mathbb{N}\setminus (\bigcup_{i=1}^k \mathcal{S}_{u_i}^{(2)}(\mathcal{A}))
\end{eqnarray*}
we have
\begin{eqnarray*}
\mathcal{S}_{u_i}^{(2)}(\mathcal{A},N)=\frac{N}{k}+O(N^{\alpha})
\end{eqnarray*}
and
\begin{eqnarray*}
\mathcal{B}(N)=O(N^{\alpha}), \hskip4mm\mbox{where}\hskip3mm \alpha=\frac{\log 3}{\log 4}.
\end{eqnarray*}
\vskip2mm

In 2016, Chen and Tang \cite{Chen2016} extended the above S\'ark\"ozy-S\'{o}s Theorem to general integer $h\ge 2$, where the method is different.
\vskip2mm

\noindent{\textbf{Theorem B (\cite[Theorem 1]{Chen2016}).}}
Let $k,h\in \mathbb{N}$ with $h\ge 2$ and let $u_1<u_2<\cdots<u_k$ be positive integers. Let $\alpha_i$ ($1\le i\le k$) be positive rational numbers with $\alpha_1+\cdots+\alpha_k=1$. Then there are infinitely many bases $\mathcal{A}$ of order $h$ such that
\begin{eqnarray}\label{1}
\mathcal{S}_{u_i}^{(h)}(\mathcal{A},N)=\alpha_i N+O(N^{\alpha}),\hskip4mm 1\le i\le k,
\end{eqnarray}
where $\alpha=\alpha(\mathcal{A})$ with $0<\alpha<1$.
\vskip1mm

Let $\mathcal{B}=\mathbb{N}\setminus (\bigcup_{i=1}^k \mathcal{S}_{u_i}^{(h)}(\mathcal{A}))$. If \eqref{1} holds, then $B(N)=O(N^{\alpha})$.
\vskip2mm

Until now there is no progress on the \emph{irrational} case. Recently the authors consider the irrational case and obtain the following result:

\begin{theorem}\label{thm1}
Let $m$ be any given positive integer and let $\alpha_0,
\cdots,\alpha_m$ be nonnegative real numbers satisfying $\alpha_0+\cdots+\alpha_m=1$. Then there are infinitely many sets 
$ \mathcal{A}\subseteq \mathbb{N}_0$ such that for every integer $i$ with $0\le i
\le m$ we have
\begin{eqnarray}\label{2}
\mathcal{S}_{i}^{(2)}(\mathcal{A},N)=\alpha_i N+O(N^{\frac{3}{4}}).
\end{eqnarray}
\end{theorem}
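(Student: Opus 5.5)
The plan is an explicit construction built on the classical "digit splitting" of $\mathbb{N}_0$. Let $\mathcal{E}$ be the set of nonnegative integers whose binary expansion uses only even positions $2^{0},2^{2},2^{4},\dots$. Let $\mathcal{F}$ be the set of those using only odd positions $2^{1},2^{3},\dots$. Every $n\in\mathbb{N}_0$ has exactly one representation $n=e+f$ with $e\in\mathcal{E}$ and $f\in\mathcal{F}$; write $f=\varphi(n)$ for the odd part of $n$. Moreover $\mathcal{E}(x),\mathcal{F}(x)\ll x^{1/2}$, and this is where the exponent $3/4$ will come from.

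\textbf{Step 1: nested subsets of $\mathcal{F}$.} Put $\beta_j=\alpha_j+\cdots+\alpha_m$ for $1\le j\le m$, so that $1\ge\beta_1\ge\cdots\ge\beta_m\ge0$. Identify $f=\sum_t \epsilon_t 2^{2t+1}\in\mathcal{F}$ with the binary word $(\epsilon_t)$ read from the top. Let $\mathcal{F}_j\subseteq\mathcal{F}$ consist of those $f$ for which the dyadic number $0.\epsilon_{L}\epsilon_{L-1}\dots\epsilon_0$ (normalized by the length $L$ of $f$) lies in $[0,\beta_j)$. A cleaner alternative is to take $f$ such that the top $\lfloor L/2\rfloor$ digits of $f$, read as a dyadic fraction, are less than $\beta_j$. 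Then $\mathcal{F}_1\supseteq\cdots\supseteq\mathcal{F}_m$, and
\[
\#\{n\le N:\varphi(n)\in\mathcal{F}_j\}=\beta_jN+O(N^{3/4}).
\]
Since $\beta_j$ may be irrational, only a truncated dyadic approximation of $\beta_j$ can be used at each scale. Truncating at about half the digits costs $O(N^{1/2})$ per dyadic block of $n$ from the approximation, plus $O(N^{3/4})$ overall from block boundaries.

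\textbf{Step 2: the set.} Choose rapidly growing even-position shifts $0=c_1<c_2<\dots<c_m$; any fixed choice works, and varying them gives infinitely many sets. Also fix a large parameter so that the copies do not interact. Define
\[
\mathcal{A}=\mathcal{E}\cup\bigcup_{j=1}^m(\mathcal{F}_j+c_j).
\]
Then $r_2(\mathcal{A},n)$ equals the main term
\[
M(n)=\#\{j:\ n-c_j\in\mathcal{E}+\mathcal{F}_j\}=\#\{j:\ \varphi(n-c_j)\in\mathcal{F}_j\},
\]
plus the contributions from $\mathcal{E}+\mathcal{E}$ and from $(\mathcal{F}_j+c_j)+(\mathcal{F}_k+c_k)$.

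\begin{itemize}
\item The $n\le N$ lying in any of these cross sumsets number $O(N^{\log 3/\log 4})=O(N^{3/4})$, since the digits of such sums are restricted.
\item Whenever subtracting $c_j$ does not propagate a borrow past the low positions, we have $\varphi(n-c_j)$ differing from $\varphi(n)$ only in boundedly many low digits. Membership in $\mathcal{F}_j$ depends only on the top half of the digits, so it is unchanged. The exceptional $n$, where a borrow runs into the top half, number $O(N^{1/2})$.
\end{itemize}

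Hence, for all $n\le N$ outside a set of size $O(N^{3/4})$, we get $r_2(\mathcal{A},n)=\#\{j:\varphi(n)\in\mathcal{F}_j\}$. By nestedness this equals $i$ exactly when $\varphi(n)\in\mathcal{F}_i\setminus\mathcal{F}_{i+1}$, with the conventions $\mathcal{F}_0=\mathcal{F}$ and $\mathcal{F}_{m+1}=\emptyset$. That set has density $\beta_i-\beta_{i+1}=\alpha_i$, which gives \eqref{2}.

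\textbf{Main obstacle.} The hard step is the uniform error bound in Step 1 combined with the carry analysis in Step 2. We need the counts of $n\le N$ with $\varphi(n)\in\mathcal{F}_j$ to be $\beta_jN+O(N^{3/4})$ for \emph{every} $N$, not just $N=4^k$. We also need the shifts not to disturb the top-half digits outside a set of size $O(N^{3/4})$.

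I would first handle $N=4^k$, using the product structure of $\varphi$ on $[0,4^k)$. For general $N$, I would decompose $[0,N]$ by the base-$4$ digits of $N$ and sum geometrically, bounding each partial block by its length times the discrepancy of the dyadic truncation of $\beta_j$. If the half-digit cutoff causes trouble at some scale, the fallback is a cutoff at $\lceil L/4\rceil$ digits. The $N^{\log3/\log4}$ bound for the cross sums then remains the bottleneck, and it still fits under $N^{3/4}$.
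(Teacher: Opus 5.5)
\textbf{First gap: the exponent.} Your bottleneck estimate is false. Since $\log 3/\log 4\approx 0.792>3/4$, the bound $O(N^{\log 3/\log 4})$ is not $O(N^{3/4})$, and it is the true size of the exceptional set, not a lossy estimate. The set $\mathcal{E}+\mathcal{E}$ is exactly the set of integers whose base-$4$ digits lie in $\{0,1,2\}$, so it has $3^k$ elements below $4^k$. Apart from $O(2^k)$ values (those in some $\mathcal{E}+c_j$, where an $\mathcal{E}+\mathcal{E}$ representation can coincide with a main-term one), each of these $n$ gets a representation not counted by $M(n)$. A concrete instance: take $\alpha_m=1$, so $\beta_j=1$, $\mathcal{F}_j=\mathcal{F}$, and $M(n)=m$ for $n\ge c_m$. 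Then $\mathcal{S}_m^{(2)}(\mathcal{A},4^k)\le 4^k-3^k+O(2^k)$. This is incompatible with $\mathcal{S}_m^{(2)}(\mathcal{A},N)=N+O(N^{3/4})$, because $3^k/4^{3k/4}\to\infty$. The same problem arises in your cross sums, since $\mathcal{F}+\mathcal{F}=2(\mathcal{E}+\mathcal{E})$. Binary splitting is too coarse. The paper splits base-$3$ digits: $F$ has base-$9$ digits in $\{0,1,2\}$, so $F+F$ has base-$9$ digits in $\{0,\dots,4\}$, and $(F+F)(x)=O(x^{\log 5/\log 9})$ with $\log 5/\log 9\approx 0.732<3/4$.

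\textbf{Second gap: Step 1 does not produce a density at all.} Membership in $\mathcal{F}_j$ is decided by the \emph{most significant} digits of $f$, and those digits are not equidistributed on $[0,N]$ for general $N$. (If the leading digit, which is always $1$, is kept in the dyadic fraction, the fraction lies in $[1/2,1)$ and the proportion is $\max(0,2\beta_j-1)$ even along $N=4^k$.) So drop it and take $\beta_j=1/2$. The condition then says that the odd-position bit just below the leading one of $f$ is $0$. Every $n\in[2^{2k+1},2^{2k+1}+2^{2k-1})$ qualifies, so the count up to $N=2^{2k+1}+2^{2k-1}$ is about $\tfrac{3}{5}N$, while up to $N=4^{k+1}$ it is about $\tfrac{1}{2}N$.

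Decomposing $[0,N]$ by the digits of $N$ cannot repair this. In the top partial blocks the relevant digits are frozen, so the discrepancy there is a constant times the block length, not the truncation error. Your fallback cutoff at $\lceil L/4\rceil$ top digits has the same defect.

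The missing idea is to read a window of digits bounded away from the top. The paper takes $\tau(f)=\chi_{2s}(\delta_{2s},\delta_{2s+2},\dots,\delta_{6s-2})$ when $8s\le H(f)<8s+8$. Then $[0,N]$ splits into complete blocks of length $3^{6s}$, on which the window is exactly equidistributed, plus one tail of length at most $3^{6S}=O(N^{3/4})$. The approximation $\theta_{2s,i}=\alpha_i+O(3^{-2s})$ of a possibly irrational $\alpha_i$ costs only $O(3^{4S+2s})$ per height range.

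With both repairs your construction becomes viable: use base-$3$ splitting, and start the window at about a quarter of the height, so that borrows from subtracting the fixed $c_j$ reach it for only $O(N^{3/4})$ values of $n$. Your disjoint shifted copies of nested sets would then be a legitimate alternative to the paper's blocks $mf+\{0,\dots,\tau(f)-1\}$ and $mg+\{0,\dots,m-1\}$. The paper's device instead needs $\tau(f(n))=\tau(f(n-1))$ off a set of size $O(N^{3/4})$.
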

\vskip3mm


Let $k\in \mathbb {N}_0$ and let $u_1<\cdots<u_k$ be positive integers. Let $\beta_1,
\cdots,\beta_k$ be positive real numbers satisfying $\beta_1+\cdots+\beta_k=1$. Let $m=u_k$ and 
\begin{eqnarray*}
\alpha_{j}=
\begin{cases} 0, &\mbox{if}\hskip3mm 0\le j\le u_k \hskip3mm\mbox{and}\hskip3mm j\neq u_1,\cdots, u_k 
\cr \beta_i, &\mbox{if}\hskip3mm j=u_i.\end{cases}
\end{eqnarray*}
Then we obtain the following result from the Theorem \ref{thm1}.
\begin{corollary}
Let $k\in \mathbb {N}_0$ and let $u_1<\cdots<u_k$ be positive integers. Let $\beta_1,
\cdots,\beta_k$ be positive real numbers satisfying $\beta_1+\cdots+\beta_k=1$. Then there are infinitely many sets $\mathcal{A}\subseteq \mathbb{N}_0$ such that 
\begin{align*}
\mathcal{S}_{u_i}^{(2)}(\mathcal{A},N)=\beta_i N+O(N^{\frac{3}{4}}).
\end{align*} 
\end{corollary}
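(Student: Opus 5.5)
The corollary should follow by specializing Theorem \ref{thm1} to the vector $(\alpha_0,\dots,\alpha_m)$ defined just before the statement. Set $m=u_k$. Put $\alpha_{u_i}=\beta_i$ for $1\le i\le k$, and $\alpha_j=0$ for every other $j\in\{0,\dots,u_k\}$. We check that this is admissible input for Theorem \ref{thm1}:
\begin{itemize}
\item $m=u_k$ is a positive integer, since the $u_i$ are positive integers.
\item Each $\alpha_j$ is nonnegative, because the $\beta_i$ are positive and all other entries are $0$.
\item Since $u_1<\cdots<u_k$ are distinct, each $\beta_i$ is placed in exactly one coordinate, so nothing is double counted. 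Hence
\[
\alpha_0+\cdots+\alpha_m=\beta_1+\cdots+\beta_k=1 .
\]
\end{itemize}
Note that $0$ is never one of the $u_i$, since they are positive. So the coordinate $\alpha_0$ is set to $0$, which Theorem \ref{thm1} allows.

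Theorem \ref{thm1} then gives infinitely many sets $\mathcal{A}\subseteq\mathbb{N}_0$ such that
\[
\mathcal{S}_j^{(2)}(\mathcal{A},N)=\alpha_jN+O(N^{3/4})\quad\text{for all } 0\le j\le u_k .
\]
Taking $j=u_i$ gives exactly
\[
\mathcal{S}_{u_i}^{(2)}(\mathcal{A},N)=\beta_iN+O(N^{3/4}),
\]
which is the claim.

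As a side remark, the remaining indices $j\ne u_i$ give $\mathcal{S}_j^{(2)}(\mathcal{A},N)=O(N^{3/4})$. Together with $\sum_{j}\mathcal{S}_j^{(2)}(\mathcal{A},N)=N$, this shows that the set of $n$ with $r_2(\mathcal{A},n)\notin\{u_1,\dots,u_k\}$ has counting function $O(N^{3/4})$. This strengthens Theorem A and Theorem B for $h=2$ to real densities, though the corollary does not require it.

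The only degenerate case is $k=0$. The hypothesis $\beta_1+\cdots+\beta_k=1$ cannot hold with an empty sum, so that case is vacuous; otherwise $k\ge1$ as assumed above. All the work lies in Theorem \ref{thm1}, so this deduction has no real obstacle.
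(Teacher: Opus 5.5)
Your proposal is correct and matches the paper's argument exactly: take $m=u_k$, $\alpha_{u_i}=\beta_i$ and all other $\alpha_j=0$, apply Theorem~\ref{thm1}, and read off the indices $j=u_i$. The one inaccuracy is in your side remark, which the corollary does not need: Theorem B also asserts that $\mathcal{A}$ is a basis of order $h$, and nothing here guarantees $r_2(\mathcal{A},n)\ge 1$ for all $n$, so the corollary does not fully strengthen Theorem B.
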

\newpage

Finally, we pose two open problems for further research.
\begin{problem}
Let $\alpha_0,\alpha _1,
\cdots $ be nonnegative real numbers satisfying $\displaystyle \sum_{i=0}^{\infty}\alpha _i=1$. Does there exist a set $\mathcal{A}\subseteq \mathbb{N}_0$ such that for every nonnegative integer $i$ we have
\begin{eqnarray*}
\mathcal{S}_{i}^{(2)}(\mathcal{A},N)=\alpha_i N+O_i(N^{\frac{3}{4}})?
\end{eqnarray*}
\end{problem}
\begin{problem}
Let $m$ be any given positive integer and let $\alpha_0,
\cdots,\alpha_m$ be nonnegative real numbers satisfying $\alpha_0+\cdots+\alpha_m=1$. Does there exists a number $c>0$ such that there is no set $\mathcal{A}\subseteq \mathbb{N}_0$ with
\begin{eqnarray*}
\mathcal{S}_{i}^{(2)}(\mathcal{A},N)=\alpha _i N+O(N^c)\quad ?
\end{eqnarray*}
\end{problem}

\section{\bf Preliminary Lemma}
We define the sets
\begin{eqnarray*}
F = \left \{\sum_{i=0}^{\infty}\varepsilon_{i}9^{i}: \varepsilon_{i}\in \{0,1,2\}, \varepsilon_{i} = 0 \textnormal{ all but finitely many } i \right \}
\hskip4mm\mbox{and}\hskip4mm G = 3\times F. 
\end{eqnarray*}
Then every nonnegative integer $n$ can be uniquely written in the form $n = f+g$, where $f\in F$, $g\in G$. Abusing the notation, we will write $f = f(n)$.
We define the height of 
\begin{eqnarray*}
n = \sum_{i=0}^{\infty}\delta_{i}3^{i} 
\end{eqnarray*}
 where $\delta_{i}\in \{0,1,2\}$ and $\delta_{i} = 0 \textnormal{ all but finitely many } i$ as
\begin{eqnarray*}
H(n) = \max\{i: \delta_{i}\neq 0 \} \hskip4mm\mbox{and}\hskip4mm H(0)=-1.
\end{eqnarray*}
\vskip2mm

\begin{lemma}
Let $\alpha_{0}, \dots{} ,\alpha_{m} \ge 0$ be real numbers with 
\begin{eqnarray*}
\sum_{i=0}^{m}\alpha_{i} = 1.
\end{eqnarray*}
Then there exists a function $\tau: F\rightarrow \{0,1,\dots{} m\}$ such that for every $0 \le i \le m$, one has
\begin{equation}
   \#\{0 \le n \le N: \tau(f(n)) = i\} = \alpha_{i}N + O(N^{3/4}), 
\end{equation}
\begin{equation}
   \#\{0 \le n \le N: \tau(f(n)) \neq \tau(f(n-1))\} = O(N^{3/4}) 
\end{equation}
\end{lemma}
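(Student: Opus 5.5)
The plan is to make $\tau(f)$ depend only on a window of \emph{middle} ternary digits of $f$, placed high enough that changes are rare and wide enough that the window is nearly uniformly distributed.

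\textbf{Digit structure.} For $n=\sum\delta_j 3^j$, the summand $f(n)$ consists of the even-position digits of $n$ (since $9^i=3^{2i}$) and $g(n)$ of the odd-position digits. Write $f=\sum_i \varepsilon_i 9^i$ and put $h=H(f)$. Define
$$L=L(h)=\lceil h/8\rceil, \qquad M=M(h)=\lceil h/8\rceil,$$
and the window value
$$u(f)=\sum_{s=0}^{M-1}\varepsilon_{L+s}\,3^{-s-1}\in[0,1).$$
Let $A_i=\alpha_0+\dots+\alpha_i$ and $A_{-1}=0$, and set $\tau(f)=i$ when $u(f)\in[A_{i-1},A_i)$. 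Any $\tau\le m$ works when $u(f)\ge A_m$, which cannot occur since $A_m=1$. The window occupies ternary positions $2L,\dots,2L+2M-2$ of $n$, roughly from $h/4$ to $h/2$.

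\textbf{Second estimate (few changes).} Going from $n-1$ to $n$ alters exactly the ternary digits of $n$ at positions $\le v_3(n)$. So $\tau(f(n))\ne\tau(f(n-1))$ forces one of two things.
\begin{itemize}
\item $3^{2L}\mid n$ with $L=L(H(f(n)))$ or $L(H(f(n-1)))$. This happens for $O(\sum_h N\,3^{-h/4})$ values of $n\le N$; the sum is dominated by the top scale $h\approx\log_3 N$ and gives $O(N^{3/4})$. I will handle smaller $h$ by splitting $[0,N]$ dyadically in $3$-adic scale and summing a geometric series.
\item $H(f)$ changes between $n-1$ and $n$. This requires a carry into the top even digit, i.e.\ $3^{H}\mid n$ with $H$ of order $\log_3 N$, giving only $O(\log N)$ such $n$ up to scale $N$.
\end{itemize}

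\textbf{First estimate (density).} I will fix a height level, i.e.\ $n\in[3^{k},3^{k+1})$, and first restrict to those $n$ with $H(f(n))\ge k-1$. The exceptional $n$ have $\delta_{k-1}=0$ with $k$ odd, and further cases of this type. These are awkward, and I will instead bound the set where $H(f(n))<k-2t$ by $O(3^{k-t})$ and treat each fixed value of $H(f)$ separately: on each such set $L$ and $M$ are constant.

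Within a fixed $(L,M)$ I cut $[0,N]$ into aligned blocks of length $3^{2L+2M}$. On each complete block the window digits run through all $3^M$ patterns equally often, while the remaining digits are fixed. Hence $u$ is equidistributed on the grid $3^{-M}\mathbb Z\cap[0,1)$, and the proportion with $\tau=i$ is $\alpha_i+O(3^{-M})$. Summing, the contributions are:
\begin{itemize}
\item equidistribution error: $O(N3^{-M})=O(N\cdot N^{-1/8})$. This is \emph{not} good enough, so the choice of $M$ must be rebalanced.
\item the single partial block: $O(3^{2L+2M})$.
\item transitions: $O(N9^{-L})$.
\end{itemize}
With $h\approx\log_3N$, I need $3^{-M}\le N^{-1/4}$, i.e.\ $M\approx h/4$; $9^{-L}\le N^{-1/4}$, i.e.\ $L\approx h/8$; and then $3^{2L+2M}\approx 3^{h/4+h/2}=N^{3/4}$. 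So the final choice is $L=\lceil h/8\rceil$ and $M=\lfloor h/4\rfloor$. The window then runs over positions about $h/4$ to $3h/4$, still below the top digit, and all three error terms are $O(N^{3/4})$.

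\textbf{Main obstacle.} The delicate step is that $L$ and $M$ depend on $H(f)$ rather than on $n$. The set of $n\le N$ with a given value of $H(f)$ is not an interval, so the block decomposition must be carried out inside each height class. The key point is that fixing $H(f)$ fixes only digits above the window, so each class is still a union of complete aligned blocks plus $O(1)$ partial ones. I will then sum the $O(3^{3h/4})$ errors geometrically over $h\le\log_3 N$, which preserves $O(N^{3/4})$.
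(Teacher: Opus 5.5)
Your construction and strategy coincide with the paper's. For $8s\le H(f)<8s+8$ the paper colours the window $\delta_{2s},\delta_{2s+2},\dots,\delta_{6s-2}$ of $f$ (so $L=s$, $M=2s$, positions from about $h/4$ to $3h/4$, as in your final choice) by a map $\chi_{2s}$ whose colour classes have sizes $\alpha_i3^{2s}+O(1)$; your cumulative-interval colouring provides the same thing. It then shows that each height class is a union of aligned blocks on which the window is equidistributed. It charges every change of $\tau$ either to $3^{2s}\mid n$ or to a change of height, which forces $3^{8s}\mid n$. Letting $L,M$ depend on each individual $h$, instead of on blocks of $8$ heights, makes no real difference.

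Where your sketch does not close as written is the summation of errors over heights. You weight every height class as if it had about $N$ elements. Your transition bound $O(\sum_h N3^{-h/4})$ is then dominated by \emph{small} $h$ and is $\Theta(N)$, not $O(N^{3/4})$. Summing the equidistribution error $O(N3^{-M})$ over $h$ fails in the same way. Splitting $n$ into ranges $[3^k,3^{k+1})$ does not cure this by itself, since $H(f(n))$ can lie far below $\log_3 n$.

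What makes the top scale dominate is the thinness of the low height classes. If $N<3^{K+1}$ and $H(f(n))=h$, then all even-position digits of $n$ above $h$ vanish, so $\#\{n\le N: H(f(n))=h\}=O(3^{(K+h)/2})$. Hence:
\begin{itemize}
\item the equidistribution error in class $h$ is $O(3^{(K+h)/2-h/4})=O(3^{K/2+h/4})$;
\item the same-height transitions in class $h$ additionally need $3^{2L}\mid n$ with $2L\approx h/4$, a constraint on disjoint digits, which again gives $O(3^{K/2+h/4})$.
\end{itemize}
Both sum geometrically over $h\le K$ to $O(N^{3/4})$. This is exactly the role of the paper's bound $|A_{8s}(N)|=O(3^{4S-4s})$, i.e.\ $|E_{8s}(N)|=O(3^{4S+4s})$, which gives error $O(3^{4S+2s})$ per block. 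It is also the role of the paper's count $3^{4S+2s+8}$ for transitions.

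You already state the needed ingredient, namely the $O(3^{k-t})$ bound for $H(f(n))<k-2t$; it has to be fed into both error sums. A minor slip: the number of $n\le N$ at which $H(f)$ changes is of order $\sqrt N$, not $O(\log N)$, which is still harmless.
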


\begin{proof}
In the first step we prove that for every $s \ge 1$ there exists a map 
$\chi_{2s}: \{0,1,2\}^{2s}\rightarrow \{0,1,\dots{} ,m\}$ such that
\begin{eqnarray*}
\#\{(u_1,\dots{} ,u_{2s})\in \{0,1,2\}^{2s}: \chi_{2s}(u_1,\dots{} ,u_{2s}) = i\} = \theta_{2s,i}3^{2s}
\end{eqnarray*}
with $|\theta_{2s,i}3^{2s}-\alpha_{i}3^{2s}|\le 1$ for every $0\leq i\leq m$.
Obviously, $3^{2s} - m \le \sum_{i=0}^{m}\lfloor \alpha_{i}3^{2s}\rfloor \le 3^{2s}$ and if $3^{2s} - u = \sum_{i=0}^{m}\lfloor \alpha_{i}3^{2s}\rfloor$, where $0 \le u \le m$, then let
\[
\theta_{2s,i} =
\begin{cases}
    \frac{\lfloor \alpha_{i}3^{2s}\rfloor+1}{3^{2s}} & \text{if }\hskip3mm 0 \le i \le u-1, \\
    \frac{\lfloor \alpha_{i}3^{2s}\rfloor}{3^{2s}} & \text{if }\hskip3mm u \le i \le m.
\end{cases}
\]
For an $f\in F$ with $f = \sum_{i=0}^{\infty}\delta_{i}3^{i}$, 
$\delta_{i}\in \{0,1,2\}$ we define
\begin{eqnarray*}
\tau(f) =
\begin{cases}
               0, & \text{if }  0 \le f < 3^{8} \\

    \chi_{2s}(\delta_{2s}, \delta_{2s+2}, \dots{} ,\delta_{6s-2}), & \text{if } 3^{8s} \le f < 3^{8s+8}, s \ge 1.
\end{cases}
\end{eqnarray*}
\vskip3mm

We show that $\tau$ is suitable. For any $s \ge 0$ and $N\in \mathbb{Z}^+$, define 
\begin{eqnarray*}
E_{8s}&=&\{n\in \mathbb N: 8s \le H(f(n)) < 8s+8\},\\
E_{8s}(N)&=&\{1 \le n \le N: 8s \le H(f(n)) < 8s+8\}. 
\end{eqnarray*}
\vskip4mm

\begin{proposition}
    Let $S$ be a nonnegative integer such that $3^{8S} \le N < 3^{8S+8}$. Then for any $1 \le s \le S$, we have
\begin{eqnarray*}
    \#\{1\le n \le N: n\in E_{8s}, \tau(f(n)) = i\} = \alpha_{i}|E_{8s}(N)|+O(3^{4S+2s}).
\end{eqnarray*}
\end{proposition}

\begin{proof}
For any $s\geq 0 $, partition $\mathbb{N}_0$ into the intervals 
\begin{eqnarray*}
I_{a,8s} = [a3^{8s}, (a+1)3^{8s}[
\end{eqnarray*}
with $a=0,1,2\dots$ We claim that either $I_{a,8s} \subseteq E_{8s}$ or  $I_{a,8s} \cap E_{8s} = \emptyset$ for any $a$.  For $n\neq n^{\prime} \in I_{a,8s}$, we can write 
\begin{eqnarray*}
n=\sum_{i=0}^{\infty}\delta_{i}3^{i}=a3^{8s}+b \ \ \text{and} \ \ n^{\prime}=\sum_{i=0}^{\infty}\delta_{i}^{\prime}3^{i}=a3^{8s}+b^{\prime},
\end{eqnarray*}
with $0\leq b\neq b^{\prime}<3^{8s}$ and $\delta_{i},\delta_{i}^{\prime}\in \{0,1,2\}$. $n\in E_{8s}$ if and only if there exists an integer $v$ satisfying 
\begin{eqnarray*}
4s \leq v< 4s+4
\end{eqnarray*}
such that $\delta_{2v} \neq 0$, but 
$\delta_{2i} = 0$ for $i>v$. Since $b$ can only influence the value of $\delta_{0},\delta_{1},\dots,\delta_{8s-1}$, the existence of such $v$ is determined by $a3^{8s}$.
Moreover, we have
\begin{eqnarray*}
\delta_{i}=\delta_{i}^{\prime} \ \ \text{for} \ \ i\geq 8s,
\end{eqnarray*}
which implies that if
$n\in(\notin)E_{8s},$ then so does $n^{\prime}$.
This proves the claim.
\vskip2mm	
    

For $s \ge 1$, let 
\begin{eqnarray*}
A_{8s}(N) = \{a\ge 1: I_{a,8s}\subseteq E_{8s}(N)\}.
\end{eqnarray*}
We show that for every $a\in A_{8s}(N)$, one has
\begin{eqnarray*}
\#\{n: n\in I_{a,8s}, \tau(f(n)) = i\} = \theta_{2s,i}3^{8s}.
\end{eqnarray*}

 For $0 \le j < 3^{8s}$, let 
\begin{eqnarray*}
a3^{8s} + j = \sum_{i=0}^{8S+7}\delta_{i}^{(j)}3^{i}, \hskip4mm\mbox{where}\hskip3mm \delta_{i}^{(j)}\in \{0,1,2\}. 
\end{eqnarray*}
Obviously,
\begin{eqnarray*}
\{(\delta_{0}^{(j)}, \dots{} ,\delta_{8s-1}^{(j)}): 0 \le j < 3^{8s}\} = \{(\gamma_{1}, \dots{} ,\gamma_{8s}): \gamma_{i}\in \{0,1,2\}\}.
\end{eqnarray*}
Thus, for every $(\eta_{1}, \dots{} ,\eta_{2s})
\in \{0,1,2\}^{2s}$, one has
\begin{eqnarray*}
\#\{0\le j \le 3^{8s}-1: (\delta_{2s}^{(j)}, \delta_{2s+2}^{(j)} \dots{} ,\delta_{6s-2}^{(j)}) = (\eta_{1}, \dots{} ,\eta_{2s})\} = 3^{6s}.
\end{eqnarray*}
Moreover, for any $n\in I_{a,8s}$, noting that $a\in A_{8s}(N)$, we have $n\in E_{8s}(N)$. Therefore, 
\begin{eqnarray*}8s \le H(f(n)) < 8s+8.\end{eqnarray*}
This implies that
\begin{eqnarray*}
\tau(f(n)) =
    \chi_{2s}(\delta_{2s}, \delta_{2s+2}, \dots{} ,\delta_{6s-2}). 
\end{eqnarray*}
Thus,
\begin{eqnarray*}
\#\{n: n\in I_{a,8s}, \tau(f(n)) = i\} 
&=&\#\{(\eta_{1}, \dots{} ,\eta_{2s})
\in \{0,1,2\}^{2s}: \chi_{2s}(\eta_{1}, \dots{} ,\eta_{2s}) = i\} \cdot 3^{6s} \\
&=&\theta_{2s,i}3^{8s}.
\end{eqnarray*}
Recall that $I_{a,8s} = [a3^{8s}, (a+1)3^{8s}[$, and as shown above,  $I_{a,8s}\subseteq E_{8s}$ if and only if $a3^{8s}\in E_{8s}$. This implies that 
\begin{eqnarray*}
|A_{8s}(N)| = \#\{a: a\le \frac{N}{3^{8s}}, a\cdot 3^{8s}\in  E_{8s}(N)\}+O(1).
\end{eqnarray*}
If $a = \sum_{i=0}^{8S+7-8s}\delta_{i}3^{i}$ and $a\cdot 3^{8s}\in  E_{8s}(N)$,  then $\delta_{8} = 0, \delta_{10} = 0, \dots{},\delta_{8S+6-8s}=0$, which implies that
\[
|A_{8s}(N)| \le 3^{8S+7-8s-(4S-4s)} + O(1) = O(3^{4S-4s}). 
\]
Clearly, $\theta_{2s,i} = \alpha_{i} + O(3^{-2s})$.
If $\left \lfloor \frac{N}{3^{8s}} \right \rfloor \cdot 3^{8s}\notin E_{8s}(N)$, then $|E_{8s}(N)| = |A_{8s}(N)|\cdot 3^{8s}$ and
\begin{eqnarray*}
\#\{n\le N: n \in E_{8s}(N), \tau(f(n)) = i\} &=& |A_{8s}(N)|\cdot \theta_{2s,i}3^{8s}
= (\alpha_{i} + O(3^{-2s}))|A_{8s}(N)|\cdot 3^{8s}\\
&=& \alpha_{i} |A_{8s}(N)|\cdot 3^{8s} + O(3^{-2s}\cdot 3^{4S-4s}\cdot 3^{8s}))\\
&=&\alpha_{i}|E_{8s}(N)| + O(3^{4S+2s}).
\end{eqnarray*}
If $\left \lfloor \frac{N}{3^{8s}} \right \rfloor \cdot 3^{8s} \in E_{8s}(N)$, $N = \left \lfloor \frac{N}{3^{8s}} \right \rfloor \cdot 3^{8s} + 3^{8s}-1$, then similarly as in the previous case we have
\begin{eqnarray*}
\#\{n\le N: n \in E_{8s}(N), \tau(f(n)) = i\} &=& |A_{8s}(N)|\cdot \theta_{2s,i}3^{8s}\\
&=& \alpha_{i}|E_{8s}(N)| + O(3^{4S+2s}).
\end{eqnarray*}
If $\left \lfloor \frac{N}{3^{8s}} \right \rfloor \cdot 3^{8s} \in E_{8s}(N)$, $N < \left \lfloor \frac{N}{3^{8s}} \right \rfloor \cdot 3^{8s} + 3^{8s}-1$, then similarly as in the previous
\begin{eqnarray*}
&&\#\{n\le N: n \in E_{8s}(N), \tau(f(n)) = i\} \\
&=& \#\left \{n:n\le \left \lfloor \frac{N}{3^{8s}} \right \rfloor \cdot 3^{8s}-1, \tau(f(n)) = i\right \}\\
&&+\#\left \{n:\left \lfloor \frac{N}{3^{8s}}\right \rfloor \cdot 3^{8s} \le n \le \left \lfloor \frac{N}{3^{8s}}\right \rfloor \cdot 3^{8s}+\left \lfloor \frac{N-\left \lfloor \frac{N}{3^{8s}}\right \rfloor \cdot 3^{8s}}{3^{6s}} \right \rfloor \cdot 3^{6s} - 1, \tau(f(n)) = i \right\}\\
&&+\#\left \{n: \left \lfloor \frac{N}{3^{8s}}\right \rfloor \cdot 3^{8s}+\left \lfloor \frac{N-\left \lfloor \frac{N}{3^{8s}}\right \rfloor \cdot 3^{8s}}{3^{6s}} \right \rfloor \cdot 3^{6s} \le n \le N, \tau(f(n)) = i\right \},
\end{eqnarray*}
where 
\begin{eqnarray*}
\#\left \{n:n\le \left \lfloor \frac{N}{3^{8s}} \right \rfloor \cdot 3^{8s}-1: \tau(f(n)) = i\right \} = |A_{8s}(N)|\cdot \theta_{2s,i}3^{8s}.
\end{eqnarray*}
Let $d$ be a nonnegative integer which satisfies $a3^{8s} + (d+1)3^{6s}-1 \le N$. Then similarly as in the previous argument we get that 
\begin{eqnarray*}
\#\{n: a3^{8s} + d3^{6s}\le n \le a3^{8s} + (d+1)3^{6s}-1, \tau(f(n)) = i\} = \theta_{2s,i}3^{6s}.
\end{eqnarray*}
Since 
\begin{eqnarray*}
\tau(f(n)) =
    \chi_{2s}(\delta_{2s}, \delta_{2s+2}, \dots{} ,\delta_{6s-2}),
\end{eqnarray*}
we have
\begin{eqnarray*}
&&\#\left \{n:\left \lfloor \frac{N}{3^{8s}}\right \rfloor \cdot 3^{8s} \le n \le \left \lfloor \frac{N}{3^{8s}}\right \rfloor \cdot 3^{8s}+\left \lfloor \frac{N-\left \lfloor \frac{N}{3^{8s}}\right \rfloor \cdot 3^{8s}}{3^{6s}} \right \rfloor \cdot 3^{6s} - 1, \tau(f(n)) = i \right\} \\
&&=\left \lfloor \frac{N-\left \lfloor \frac{N}{3^{8s}}\right \rfloor \cdot 3^{8s}}{3^{6s}} \right \rfloor \cdot \theta_{2s,i} \cdot 3^{6s}.
\end{eqnarray*}
Obviously, 
\begin{eqnarray*}
0 \le N-\left(\left \lfloor \frac{N}{3^{8s}}\right \rfloor \cdot 3^{8s}-\left \lfloor \frac{N-\left \lfloor \frac{N}{3^{8s}}\right \rfloor \cdot 3^{8s}}{3^{6s}} \right \rfloor \cdot 3^{6s}\right) \le 3^{6s}.
\end{eqnarray*}
So 
\begin{eqnarray*}
&&\#\left\{n: \left \lfloor \frac{N}{3^{8s}}\right \rfloor \cdot 3^{8s}+\left \lfloor \frac{N-\left \lfloor \frac{N}{3^{8s}}\right \rfloor \cdot 3^{8s}}{3^{6s}} \right \rfloor \cdot 3^{6s} \le n \le N, \tau(f(n)) = i\right \}\\
&=&\theta_{2s,i}\#\left\{n: \left \lfloor \frac{N}{3^{8s}}\right \rfloor \cdot 3^{8s}+\left \lfloor \frac{N-\left \lfloor \frac{N}{3^{8s}}\right \rfloor \cdot 3^{8s}}{3^{6s}} \right \rfloor \cdot 3^{6s} \le n \le N\right \} + O(3^{6s}).
\end{eqnarray*}
It follows that 
\begin{eqnarray*}
&&\#\{n: l\le N, n \in E_{8s}(N), \tau(f(n)) = i\} \\
&=&\theta_{2s,i}\cdot  |A_{8s}(N)|\cdot 3^{8s} 
+ \theta_{2s,i}\left \lfloor \frac{N-\left \lfloor \frac{N}{3^{8s}}\right \rfloor \cdot 3^{8s}}{3^{6s}} \right \rfloor \cdot 3^{6s} \\
&&+\theta_{2s,i}\#\left\{n: \left \lfloor \frac{N}{3^{8s}}\right \rfloor \cdot 3^{8s}+\left \lfloor \frac{N-\left \lfloor \frac{N}{3^{8s}}\right \rfloor \cdot 3^{8s}}{3^{6s}} \right \rfloor \cdot 3^{6s} \le n \le N, \tau(f(n)) = i\right \}
+ O(3^{6s})\\
&=& \theta_{2s,i}(|A_{8s}(N)|\cdot 3^{8s} + \left \lfloor \frac{N-\left \lfloor \frac{N}{3^{8s}}\right \rfloor \cdot 3^{8s}}{3^{6s}} \right \rfloor \cdot 3^{6s}
\\
&&+\#\left \{n: \left \lfloor \frac{N}{3^{8s}}\right \rfloor \cdot 3^{8s}
+\left \lfloor \frac{N-\left \lfloor \frac{N}{3^{8s}}\right \rfloor \cdot 3^{8s}}{3^{6s}} \right \rfloor \cdot 3^{6s} \le n \le N, \tau(f(n)) = i \right \}) 
+ O(3^{6s})\\
&=&\theta_{2s,i}|E_{8s}(N)|+ O(3^{6s}) = \alpha_{i}|E_{8s}(N)|+ O(3^{4S+2s}).
\end{eqnarray*}
\end{proof}
Since $G(N) = O(\sqrt{N})$, one has
\[
\#\{n\le N: f(n)<3^{8}, \tau(f(n)) = i\}\le F(3^{8})\cdot G(N) = O(\sqrt{N})
\]
Then we obtain
\begin{eqnarray*}
\sum_{s=1}^{S}|E_{8s}(N)| = \#\{n\le N: 8\le H(f(n))\} = N-O(\sqrt{N}).
\end{eqnarray*}
Furthermore,
\begin{eqnarray*}
\#\{n: n\le N, \tau(f(n)) = i\} &=& O(\sqrt{N}) + \sum_{s=1}^{S}\#\{n: n\le N, 8s \le H(f(n)) < 8s+8, \tau(f(n)) = i\}\\
&=&O(\sqrt{N}) + \sum_{s=1}^{S}(\alpha_{i}|E_{8s}(N)| + O(3^{4S+2s}))\\
&=& O(\sqrt{N}) + \alpha_{i}\sum_{s=1}^{S}|E_{8s}(N)| + \sum_{s=1}^{S}O(3^{4S+2s}))\\
&=&O(\sqrt{N}) + \alpha_{i}(N+O(\sqrt{N})) + O(3^{6S}) = \alpha_{i}N + O(N^{3/4}), 
\end{eqnarray*}
which proves (1). 
\vskip3mm

Now we prove (2). Obviously, 
\begin{eqnarray*}
\#\{n\le N: \tau(f(n)) \neq \tau(f(n-1))\}
&=&\#\{n\le N: \tau(f(l)) \neq \tau(f(n-1)), H(f(n)) = H(f(n-1))\} \\
&+&\#\{n\le N: \tau(f(n)) \neq \tau(f(n-1)), H(f(n)) \neq H(f(n-1))\}.
\end{eqnarray*}

Now let $n = \sum_{i=0}^{8S+7}\delta_{i}3^{i}$, $\delta_{i}\in \{0,1,2\}$.
If $H(f(n)) \le 4S+4$, then $\delta_{4S+6} = 0$, $\delta_{4S+8} = 0, \dots{} ,\delta_{8S+6} = 0$. So we can choose only the remaining digits $\delta_{0}, \delta_{1}, \dots{}  ,\delta_{4S+5}, \dots{} ,\delta_{8S+7}$ which implies that
\begin{eqnarray*}
\#\{n: n\le N: H(f(n)) \le 4S+4\} \le 3^{6S+7} = O(N^{3/4}).
\end{eqnarray*}
\vskip2mm

\begin{proposition}
For $S/2 \le s \le S$, we have
\begin{eqnarray*}
\#\{n\le N: \tau(f(n)) \neq \tau(f(n-1)), 8s \le H(f(n)) = H(f(n-1))< 8s+8\} = O(N^{3/4}).
\end{eqnarray*}
\end{proposition}

\begin{proof}
 We prove that if $\tau(f(n)) \neq \tau(f(n-1)), 8s \le H(f(n)) = H(f(n-1))< 8s+8$, then $3^{2s}\mid n$. Assume that $3^{2s}\nmid n$. From now on let
\begin{eqnarray*}
 n = \sum_{i=0}^{8S+7}\delta_{i}3^{i}, \delta_{i}\in \{0,1,2\}
\hskip4mm \mbox{and}\hskip4mm 
 n-1 = \sum_{i=0}^{8S+7}\delta_{i}^{'}3^{i}, 
 \delta_{i}^{'}\in \{0,1,2\}.
 \end{eqnarray*} 
 Since
\begin{eqnarray*}
 n\not\equiv 0 \pmod {3^{2s}}, 
\end{eqnarray*} 
 there exists an integer $r$ with $0\le r<2s $ such that 
 \[ 
 \delta_r\ne 0. 
 \] 
 Therefore,
 we have $\delta_j = \delta_{j}^{'}$ for every $2s \le j \le 8S+7$ so that $(\delta_{2s}, \dots{} ,\delta_{6s-2}) = (\delta_{2s}^{'}, \dots{} ,\delta_{6s-2}^{'})$, which implies that 
 $\tau(f(n)) = \tau(f(n-1))$ a contradiction. It follows that
 \begin{eqnarray*}
 &&\#\{n\le N: \tau(f(n)) \neq \tau(f(n-1)), 8s \le H(f(n)) = H(f(n-1)) < 8s+8\} \\
&&\le \#\{n\le N: 8s \le H(f(n)) = H(f(n-1))< 8s+8, 3^{2s}\mid n\}.
\end{eqnarray*}
If $8s \le  H(f(n)) < 8s + 8$ and $3^{2s}\mid n$, then $\delta_j = 0$ for every $0 \le j \le 2s-1$ and  $\delta_{8s+8} = \delta_{8s+10} = \dots{} = \delta_{8S+6} = 0$, thus we can choose arbitrary $\delta_{2s}, \delta_{2s+1}, \dots{} ,\delta_{8s+7}, \delta_{8s+9}, \dots{} ,\delta_{8S+7}$ so that
\[
\#\{n\le N: \tau(f(n)) \neq \tau(f(n-1)), 8s \le H(f(n)) = H(f(n-1)) < 8s+8\} \le 3^{4S+2s+8} = O(3^{4S+2s}).
\]
It follows that 
\begin{eqnarray*}
&&\#\{n\le N: \tau(f(n)) \neq \tau(f(n-1)), H(f(n)) = H(f(n-1))\} \\
&\le& \#\{n\le N: H(f(n)) = H(f(n-1))\le 4S+4\}\\
&&+\sum_{S/2\le s \le S}\#\{n\le N: \tau(f(n)) \neq \tau(f(n-1)), 8s \le H(f(n)) = H(f(n-1))< 8s+8\}\\
&\le& O(N^{3/4}) + \sum_{S/2 \le s \le S}O(3^{4S+2s})\\
&=&O(N^{3/4}) + O(3^{6S}) = O(N^{3/4}).
\end{eqnarray*}
\end{proof}
Furthermore, 
\begin{eqnarray*}
&&\#\{n\le N: \tau(f(n)) \neq \tau(f(n-1)), H(f(n)) \neq H(f(n-1))\} \\
&=& \#\{n\le N: \tau(f(n)) \neq \tau(f(n-1)), H(f(n)) \neq H(f(n-1)), H(f(n))\le 4S+4\}\\
&+& 
\sum_{s=S/2}^{S}\#\{n\le N: \tau(f(n)) \neq \tau(f(n-1)), H(f(n)) \neq H(f(n-1)), 8s \le H(f(n)) < 8s+8\}.
\end{eqnarray*}
If $\tau(f(n)) \neq \tau(f(n-1)), H(f(n)) \neq H(f(n-1)), 8s \le H(f(n)) < 8s+8$ for $n\le N$, then $3^{8s}\mid n$, otherwise $\delta_j = \delta_j^{'}$ for every $j=8s, 8s+1, \dots{}$ so that $H(f(n)) = H(f(n-1))$ would give a contradiction. Thus we have
\begin{eqnarray*}
&&\#\{n\le N: \tau(f(n)) \neq \tau(f(n-1)), H(f(n)) \neq H(f(n-1))\}\\
&\le& \#\{n\le N: H(f(n))\le 4S+4\} 
+ \sum_{S/2 \le s \le S}\#\{n\le N: 3^{8s}\mid n\}\\
&\le& O(N^{3/4}) + \sum_{S/2 \le s \le S}\left(\frac{N}{3^{8s}} + 1\right)\\
&=& O(N^{3/4}) + O\left(\frac{N}{3^{4S}}\right) = O(N^{3/4}),
\end{eqnarray*}
which proves (2).
\end{proof}

\section{Proof of Main Result}    
Define the set $\mathcal{A}_{0}$ as
\begin{eqnarray*}
\mathcal{A}_{0} = A_{F} \cup A_{G} = \left(\bigcup_{f\in F}\{mf+\{0,1,\dots{} ,\tau(f)-1\}\}\right) \bigcup \left(\bigcup_{g\in G}\{mg + \{0,1,\dots{} ,m-1\}\}\right).
\end{eqnarray*}
We will prove that $\mathcal{A}_{0}$ is suitable. Now we compute the number of solutions of the equation $a+b=nm+r$, $a,b\in \mathcal{A}_{0}$.
\begin{proposition}
Let $n\ge 1$ be an integer such that $\tau(f(n)) = \tau(f(n-1)) = i$,
$[nm, (n+1)m-1] \cap (A_{F} + A_{F}) = \emptyset$, 
$[nm, (n+1)m-1] \cap (A_{G} + A_{G}) = \emptyset$. 
Then for every $0 \le r < m$, we have $r_2(\mathcal{A}_{0}, nm+r) = i$.
\end{proposition}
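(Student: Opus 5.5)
Under the hypotheses on $[nm,(n+1)m-1]$, any representation $nm+r=a+b$ with $a\le b$, $a,b\in\mathcal{A}_0$, must have one summand in $A_F$ and the other in $A_G$. So the plan is to count the mixed representations $a+b$ with $a\in A_F$, $b\in A_G$ and show there are exactly $i$ of them. First I check that $A_F\cap A_G=\emptyset$. Elements of $A_F$ lie in blocks $[mf,mf+m-1]$ and elements of $A_G$ lie in blocks $[mg,mg+m-1]$. Since $F\cap G=\{0\}$, these blocks coincide only at $f=g=0$, and $\tau(0)=0$ makes the $A_F$ block for $f=0$ empty. Hence each unordered representation corresponds to exactly one ordered pair $(a,b)\in A_F\times A_G$.

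Next, write $a=mf+x$ with $0\le x<\tau(f)$ and $b=mg+y$ with $0\le y\le m-1$. The equation $mf+x+mg+y=nm+r$ with $0\le x+y\le 2m-2$ splits into two cases.
\begin{itemize}
\item[(i)] $x+y=r$ and $f+g=n$. The decomposition $n=f+g$ is unique, so $f=f(n)$. Then $x$ ranges over $0\le x\le \min(r,\tau(f(n))-1)$, with $y=r-x\in[0,m-1]$.
\item[(ii)] $x+y=r+m$ and $f+g=n-1$. Then $f=f(n-1)$, and we need $x\ge r+1$ (since $y\le m-1$) and $x<\tau(f(n-1))$, with $y=r+m-x\le m-1$ automatically and $y\ge 0$ since $x\le m-1$.
\end{itemize}
The case $x+y=r-m<0$ is impossible.

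Counting, case (i) gives $\min(r+1,i)$ solutions and case (ii) gives $\max(0,i-r-1)$ solutions, using $\tau(f(n))=\tau(f(n-1))=i\le m$. Their sum is exactly $i$, so $r_2(\mathcal{A}_0,nm+r)=i$. This needs $n\ge1$ so that $n-1\in\mathbb{N}_0$.

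The main subtlety is the disjointness and ordering bookkeeping, that is, making sure that no representation is counted twice and that none with both summands in $A_F$ or both in $A_G$ sneaks in. The latter is exactly excluded by the hypotheses, and the former follows from $F\cap G=\{0\}$ together with $\tau(0)=0$, which holds because $0<3^8$.
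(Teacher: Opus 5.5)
Your proof is correct and follows the paper's argument. Both split $nm+r=(fm+x)+(gm+y)$ into the cases $f+g=n,\ x+y=r$ and $f+g=n-1,\ x+y=m+r$. Your count $\min(r+1,i)+\max(0,i-r-1)=i$ is the paper's two-range computation ($r\le \tau(f(n))-2$ versus $r\ge \tau(f(n))-1$) written in one line.

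Your explicit check that $A_F\cap A_G=\emptyset$ (via $F\cap G=\{0\}$ and $\tau(0)=0$) settles a point the paper leaves implicit. It is also already forced by the two emptiness hypotheses: a summand lying in both $A_F$ and $A_G$ would put $nm+r$ in $A_F+A_F$ or in $A_G+A_G$.
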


\begin{proof}
If $nm + r = (fm+c) + (gm+d)$, where $f\in F, g\in G$ and $0 \le c \le \tau(f)-1$, $0\le d \le m-1$, then $0 \le c+d \le 2m-2$. Thus, either $f+g=n$ and $c+d=r$, or $f+g=n-1$ and $c+d=m+r$.
\vskip2mm

For $0 \le r \le \tau(f(n))-2$, if $f+g=n$, $f\in F, g\in G$,
then $nm + r = (fm+j) + (gm+r-j)$ for every $0\le j \le r$. If 
$f+g=n-1$, $f\in F, g\in G$, then $nm + r = (fm+j) + (gm+m+r-j)$ for every $r+1\le j \le \tau(f(n))-1 = \tau(f(n-1))-1$. 
It follows that the number of solutions of $a+b =nm + r$ with $a\in A_F$, $b\in A_G$ is $\tau(f(n)) = \tau(f(n-1))=i$. On the other hand, there is no solution with $a,b\in A_F$ or $a,b\in A_G$ such that $r_2(\mathcal{A}_{0}, nm+r) = i$. 
\vskip2mm

If $\tau(f(n))-1 \le r \le m-1$, then the solutions of the equation $a+b=nm+r$ with $a\in A_F$, $b\in A_G$ are
$nm + r = (fm+j) + (gm+r-j)$ for every $0\le j \le \tau(f(n))-1$, where $f+g=n$, $f\in F, g\in G$. Similarly as above, we get that $r_2(\mathcal{A}_{0}, nm+r) = i$ for every $0\le r \le m-1$.
\end{proof}

It follows from Lemma 2.1 that for $0\le i\le m$, we have
\begin{eqnarray*}
\#\{n\le N/m: \tau(f(n)) = \tau(f(n-1)) = i\} = \alpha_{i}\frac{N}{m}+O(N^{3/4}).
\end{eqnarray*}
If $n\in F+F$, then $n = \sum_{i=0}^{\infty}\beta_{i}9^{i}$, 
$\beta_{i}\in \{0,1,2,3,4\}$ and $\beta_i = 0$ all but finitely many $i$. It follows that $(F+F)(9^{m}-1) = 5^{m}$. Therefore,  
\begin{eqnarray*}
(F+F)(x) = O(x^{\frac{\log 5}{\log 9}}) = O(x^{3/4}).
\end{eqnarray*}
Then
\begin{eqnarray*}
(A_F + A_F)(x) \le (F+F)(x)\cdot 2m = O(x^{3/4}),
\end{eqnarray*}
\begin{eqnarray*}
(A_G + A_G)(x) \le (G+G)(x)\cdot 2m \le (F+F)(x)\cdot 2m = O(x^{3/4}).
\end{eqnarray*}
It follows that 
\begin{eqnarray*}
\#\{n\le N/m: [nm, (n+1)m-1] \cap (A_{F} + A_{F})\neq \emptyset\} = O(N^{3/4})
\end{eqnarray*}
and
\begin{eqnarray*}
\#\{n\le N/m:[nm, (n+1)m-1] \cap (A_{G} + A_{G})\neq \emptyset\} = O(N^{3/4}).
\end{eqnarray*}
Thus,
\begin{eqnarray*}
\#\{n\le N/m:\tau(f(n)) = \tau(f(n-1)) = i,[nm, (n+1)m-1] \cap (A_{F} + A_{F}) = \emptyset, 
\end{eqnarray*}
\begin{eqnarray*}
[nm, (n+1)m-1] \cap (A_{G} + A_{G}) = \emptyset\} = \alpha_{i}\frac{N}{m} + O(N^{3/4}).
\end{eqnarray*}
Therefore,
\begin{eqnarray*}
&&\#\{n\le N: r_2(\mathcal{A}_{0},n) = i\} \\
&=& m\cdot \#\{n\le N/m:\tau(f(n)) = \tau(f(n-1)) = i,[nm, (n+1)m-1] \cap (A_{F} + A_{F}) = \emptyset, \\
&&[nm, (n+1)m-1] \cap (A_{G} + A_{G}) = \emptyset\} + O(1)\\
&&+ O(\#\{n\le N/m: \tau(f(n)) \neq \tau(f(
n-1))\}) + O(\#\{n: n\le N, n\in A_{F} 
+ A_{F}\}) \\
&&+ O(\#\{n: n\le N, n\in A_{G} + A_{G}\}) \\
&=&\alpha_{i}N + O(N^{3/4}) + O(1) + O(N^{3/4}) + O(N^{3/4}) + O(N^{3/4})\\
&=&\alpha_{i}N + O(N^{3/4}).
\end{eqnarray*}
It infers that there exists at least one desired set $\mathcal{A}_{0}$. If for some set $\mathcal{A}$ we have $\#(\mathcal{A}_{0}\bigtriangleup \mathcal{A})<\infty$ (the symmetric difference is finite), and $r_2(\mathcal{A}_{0},n)\ne r_2(\mathcal{A},n)$ for some nonnegative integer $n$, then there exists an integer $c\in \mathcal{A}_{0}\bigtriangleup \mathcal{A}$ and an integer  $d\in \mathcal{A}_{0}\cup \mathcal{A}$ such that $n=c+d$. Since $\# (\mathcal{A}_{0}\bigtriangleup \mathcal{A})<\infty$ and $(\mathcal{A}_{0}\cup \mathcal{A})(N)=O(\sqrt{N})$, we get that 
\begin{eqnarray*}
\# \{ n\le N: r_2(\mathcal{A}_{0},n)\ne r_2(\mathcal{A},n)\} =O(\sqrt{N}).
\end{eqnarray*}
It follows that 
\begin{eqnarray*}
\#\{n\le N: r_2(\mathcal{A},n) = i\} =\alpha_{i}N + O(N^{3/4})
\end{eqnarray*}
for every $0\le i\le m$. It follows that there exist infinitely many sets $\mathcal{A}\subseteq \mathbb{N}_0$ with the desired properties. 
\vskip3mm

This completes the proof of Theorem 1.1.
\vskip3mm

\bigskip

{\bf Use of AI disclaimer:} 
During the development of this work, the authors used OpenAI ChatGPT (GPT-5.5 Thinking and GPT-5.5 Pro), as an auxiliary tool. Based on the works \cite{Chen2016}, \cite{Sarkozy1997} and a probabilistic method introduced by the authors, one suggestion (the use of some kind of function $\tau(f(n))$) arises from this interaction. The authors independently verified, completed, and wrote the argument, and take full responsibility for all mathematical claims and the final content of the paper.

\end{document}